\newfont{\script}{eusm10 scaled\magstep1}
\providecommand{\Real}{\mathop{\rm Re}\nolimits}%
\providecommand{\Imag}{\mathop{\rm Im}\nolimits}%
\definecolor{jddcol}{rgb}{0,0,0.8}
\definecolor{afcol}{rgb}{1,0,0}
\newtheorem{theorem}{Theorem}[section]
\newtheorem{lemma}[theorem]{Lemma}
\theoremstyle{definition}
\theoremstyle{remark}
\begin{document}

\begin{frontmatter}

\title{Fractional differential relations for the Lerch zeta function}

%%this line removes the date, but space is still left for it;
%if used, remove the \vspace{-1cm}
\date{}

%this gives the date in the form Mon 30 Jan 2012, 8:57pm;
%if used, retain the \vspace{-1cm}
%\date{\shortdayofweekname{\day}{\month}{\year}{ }\mydate\today}

\author[1]{Arran Fernandez\corref{cor1}}
\ead{arran.fernandez@emu.edu.tr}
\author[2]{Jean-Daniel Djida}
\ead{jeandaniel.djida@aims-cameroon.org}
\cortext[cor1]{Corresponding author}

\address[1]{\small Department of Mathematics, Eastern Mediterranean University, Famagusta, Northern Cyprus, via Mersin-10, Turkey}
\address[2]{{\small African Institute for Mathematical Sciences (AIMS), P.O. Box 608, Limbe Crystal Gardens, South West Region, Cameroon}}

\begin{abstract}
Starting from a recent result expressing the Lerch zeta function as a fractional derivative, we consider further fractional derivatives of the Lerch zeta function with respect to different variables. We establish a partial differential equation, involving an infinite series of fractional derivatives, which is satisfied by the Lerch zeta function.
\end{abstract}

\begin{keyword}
Lerch zeta function \sep
fractional calculus \sep
partial differential equations
\MSC 11M35 \sep 26A33 \sep 35R50 \sep 30B40
\end{keyword}

\end{frontmatter}

\section{Introduction and Preliminaries}

The Riemann zeta function is one of the most important functions in the field of analytic number theory. It is the subject of the famous Riemann Hypothesis, as well as the Lindel\"of Hypothesis and several other unsolved mathematical problems, and its properties contain the key to describing the distribution of the prime numbers, which in turn is important in cryptography and computer security. For detailed discussion of this function and its significance, we refer to textbooks such as \cite{Titchmarsh,Edwards,Ivic}.

The Riemann zeta function is usually denoted by $\zeta(s)$ where $s=\sigma+it$ is a complex variable. In the right half plane $\sigma>1$, the function is defined by
\[
\zeta(s)=\sum_{m=0}^{\infty}m^{-s}\quad\text{ for }\Real(s)>1.
\]
Its definition for the remainder of the complex plane is given by analytic continuation in the variable $s$. There is a functional equation which relates $\zeta(s)$ with $\zeta(1-s)$ in an elementary way, and the most interesting and unknown range of values is given by $0<\sigma<1$, the so-called ``critical strip''.

There are many generalisations of the Riemann zeta function, such as the Dirichlet L-functions in number theory, and the Hurwitz and Lerch zeta functions which are again analytic functions of complex variables. We are going to focus on the latter functions, whose definitions we borrow from \cite{Lagarias2012}. The Hurwitz zeta function is defined by
\begin{equation}\label{Hurwitz:defn}
\zeta(x,s)=\sum_{m=0}^{\infty}(m+x)^{-s}\quad\text{ for }\Real(s)>1,~\Real(x)>0,
\end{equation}
and by analytic continuation for all $s\in\mathbb{C}$, while the Lerch zeta function is defined by
\begin{equation}\label{Lerch:defn}
L(t,x,s)=\sum_{m=0}^{\infty}(m+x)^{-s}e^{2\pi itm}\quad\text{ for }~\Real(s)>1,~\Real(x)>0,~\Imag(t)\geq 0,
\end{equation}
and by analytic continuation for $(t,x,s)$ in larger domains, extending to a universal cover of the manifold $\mathbb{C} \backslash\mathbb{Z}\times\mathbb{C}\backslash\mathbb{Z}_0^-\times\mathbb{C}$ (see \cite{Lagarias2012} for details). As one could have expected, the Riemann, Hurwitz, and Lerch zeta functions are related by the following identities: 
\[
\zeta(s)=\zeta(1,s);\quad\zeta(x,s)= L(0,x,s).
\]

Not only the Riemann zeta function, but also its extensions such as the Hurwitz and Lerch zeta functions, have been the subject of many analytic studies. In particular, a recent paper \cite{Arran-Lerch2018} of the first author established a new formula for the Lerch zeta function (hence also the Hurwitz and Riemann zeta functions as special cases) in terms of fractional calculus. This formula is reproduced in Theorem \ref{frac-Lerch} below, but first we shall introduce some main concepts and important properties of fractional calculus.\medskip

Fractional derivatives and integrals are the operators which emerge from the notion of the $n$th derivative or $n$th repeated integral when $n$ is no longer an integer. There are several different proposals for how to define these operators, which can be classified into broad classes of ``fractional operators'' \cite{baleanu-fernandez}, including integral operators with various kernel functions \cite{fernandez-ozarslan-baleanu} and differentiation operators with respect to monotonic functions \cite[\S18]{samko-kilbas-marichev}, among others.

We shall focus here on the definition of fractional calculus which is generally seen as the most fundamental and important, namely the Riemann--Liouville definition. Then, fractional integrals are defined by
\[
\prescript{}{a}I^{\nu}_tf(t)=\frac{1}{\Gamma(\nu)}\int_a^t(t-\tau)^{\nu-1}f(\tau)\,\mathrm{d}t\quad\text{ for }~\Real(\nu)>0,
\]
where $a$ is a constant of integration. (In this paper we shall usually take $a=-\infty$ for a semi-infinite interval of integration.) Fractional derivatives are defined by
\[
\prescript{}{a}D^{\nu}_tf(t)=\frac{\mathrm{d}^n}{\mathrm{d}t^n}\prescript{}{a}I^{n-\nu}_tf(t)\quad\text{ for }~\Real(\nu)\geq0,\quad n-1<\Real(\nu)\leq n,
\]
and we shall use the convention that $\prescript{}{a}D^{-\nu}_tf(t)=\prescript{}{a}I^{\nu}_tf(t)$ so that the expression $\prescript{}{a}D^{\nu}_tf(t)$ is defined for all $\nu$ in the whole complex plane. We sometimes call this expression a fractional differintegral, this term covering both derivatives and integrals.\medskip

It is also possible to define the right-handed version of Riemann--Liouville fractional calculus, which can be viewed as fractional powers of the operators $\int_t^b$ and $-\frac{\mathrm{d}}{\mathrm{d}t}$ instead of the operators $\int_a^t$ and $\frac{\mathrm{d}}{\mathrm{d}t}$. In this case, the formulae are
\[
\prescript{}{t}I^{\nu}_bf(t)=\frac{1}{\Gamma(\nu)}\int_t^b(\tau-t)^{\nu-1}f(\tau)\,\mathrm{d}t\quad\text{ for }~\Real(\nu)>0,
\]
\[
\prescript{}{t}D^{\nu}_bf(t)=(-1)^n\frac{\mathrm{d}^n}{\mathrm{d}t^n}\prescript{}{t}I^{n-\nu}_bf(t)\quad\text{ for }~\Real(\nu)\geq0,\quad n-1<\Real(\nu)\leq n,
\]
where $b$ is a constant of integration. In the special case that $b=\infty$, for a semi-infinite interval of integration, this type of fractional calculus is called Weyl fractional calculus \cite{miller-ross}. There is a simple relationship between $\prescript{}{-\infty}D_t^{\nu}$ and $\prescript{}{t}D_{\infty}^{\nu}$ which is given by changing the sign of the variable $t$ \cite{miller-ross}. \medskip

The operators of Riemann--Liouville and Weyl fractional calculus have many interesting properties, and their theory is covered in depth in several textbooks such as \cite{samko-kilbas-marichev, miller-ross,oldham-spanier}. In particular, we would like to mention the generalised Leibniz rule for fractional derivatives and integrals, as follows.

\begin{theorem}[\cite{Watanabe,Osler}]
\label{Thm:Leibniz}
If $f(z)$ and $g(z)$ are two analytic functions on $\mathbb{C}$, then the fractional differintegral of their product is given by
\begin{equation}
\label{Leibniz:eqn}
\prescript{}{a}D^{\nu}_z\big(f(z)g(z)\big)=\sum_{n=-\infty}^{\infty}\binom{\nu}{\mu+n}\prescript{}{a}D^{\nu-\mu-n}_zf(z)\prescript{}{a}D^{\mu+n}_zg(z),
\end{equation}
for any $\mu,\nu\in\mathbb{C}$ with $\nu\not\in\mathbb{Z}^-$.
\end{theorem}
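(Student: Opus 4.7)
The plan is to proceed via the Cauchy-type contour representation of the Riemann--Liouville differintegral, combined with a bilateral binomial expansion. First, I would invoke the fact that for $h$ analytic on a neighbourhood containing $[a,z]$, the fractional differintegral admits a Pochhammer-type representation
$$\prescript{}{a}D^{\nu}_z h(z)=\frac{\Gamma(\nu+1)}{2\pi i}\oint_{C}\frac{h(w)}{(w-z)^{\nu+1}}\,\mathrm{d}w,$$
where $C$ is a loop starting and ending at $a$ and encircling $z$. Applying this with $h=fg$ reduces the identity (\ref{Leibniz:eqn}) to a statement about a suitable expansion of the kernel $(w-z)^{-\nu-1}$.

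The second step is Osler's key algebraic device. I would pick an auxiliary base-point $z_1$, conveniently chosen so that $|z-z_1|<|w-z_1|$ uniformly along the contour, and write $w-z=(w-z_1)-(z-z_1)$. The generalized non-integer binomial expansion, in its bilateral form, then gives
$$(w-z)^{-\nu-1}=\sum_{n=-\infty}^{\infty}c_n(\mu,\nu)\,(w-z_1)^{-(\nu-\mu-n)-1}(z-z_1)^{\mu+n},$$
with coefficients $c_n(\mu,\nu)$ that, after rearrangement using the reflection formula for $\Gamma$, are proportional to $\binom{\nu}{\mu+n}$. Substituting back into the contour integral and exchanging sum and integral, each term separates: the contour factor reproduces $\prescript{}{a}D^{\nu-\mu-n}_z f(z)$ via the same Cauchy-type formula, while the $(z-z_1)^{\mu+n}$ factor is itself recognised, after a second application of the Cauchy formula, as $\prescript{}{a}D^{\mu+n}_z g(z)$. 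Collecting everything yields the right-hand side of (\ref{Leibniz:eqn}).

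An alternative route I would keep in reserve is to verify (\ref{Leibniz:eqn}) first on the power functions $f(z)=(z-a)^\alpha$ and $g(z)=(z-a)^\beta$. Then both sides reduce to explicit products of Gamma functions, and the identity collapses to a bilateral analogue of the Chu--Vandermonde convolution (a Dougall-type hypergeometric identity) which can be checked independently. Extending to general analytic $f$ and $g$ is then a matter of expanding in Taylor series about $a$ and swapping the resulting double summations.

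The principal obstacle, in either route, is convergence. The bilateral series converges only conditionally in general, so one must choose the contour or the expansion radii carefully---in the first route by ensuring $|z-z_1|/|w-z_1|$ stays uniformly below $1$ on $C$, and in the second by producing an absolutely convergent double series from the two Taylor expansions---in order to justify interchanging sum and integral (or the two sums). The exclusion $\nu\notin\mathbb{Z}^-$ is precisely the condition which prevents the binomial coefficients $\binom{\nu}{\mu+n}$, together with the reflection formula, from producing spurious singularities that would otherwise collapse the bilateral expansion.
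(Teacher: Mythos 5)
The paper does not actually prove this statement: Theorem \ref{Thm:Leibniz} is imported verbatim from the cited works of Watanabe and Osler, so there is no in-paper argument to compare against, and your sketch must be judged on its own terms. Your overall strategy (Cauchy-type contour representation plus a bilateral expansion) is indeed the strategy of the cited sources, but your first route has a concrete flaw in the step that is supposed to produce the product structure on the right-hand side. If you expand only the kernel, writing $(w-z)^{-\nu-1}=\sum_{n}c_n(\mu,\nu)(w-z_1)^{-(\nu-\mu-n)-1}(z-z_1)^{\mu+n}$ and substitute into $\oint f(w)g(w)(w-z)^{-\nu-1}\,\mathrm{d}w$, each term becomes $c_n(z-z_1)^{\mu+n}\oint f(w)g(w)(w-z_1)^{-(\nu-\mu-n)-1}\,\mathrm{d}w$. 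The contour factor is then a differintegral of the \emph{product} $fg$ based at $z_1$, not of $f$ alone at $z$, and $(z-z_1)^{\mu+n}$ is a bare power function carrying no information about $g$ whatsoever; no ``second application of the Cauchy formula'' can turn it into $\prescript{}{a}D^{\mu+n}_zg(z)$. Nothing in this manipulation ever separates $f$ from $g$: what it actually yields is a fractional Taylor expansion of $\prescript{}{a}D^{\nu}_z(fg)$ about the auxiliary point. The missing idea---the heart of Osler's proof---is to expand one \emph{factor}, say $g(w)$, in the generalized bilateral Taylor series about $z$ itself, $g(w)=\sum_{n=-\infty}^{\infty}\frac{\prescript{}{a}D^{\mu+n}_zg(z)}{\Gamma(\mu+n+1)}(w-z)^{\mu+n}$ (a nontrivial theorem in its own right, with its own convergence hypotheses), and only then integrate term by term against $f(w)(w-z)^{-\nu-1}$; the factor $\Gamma(\nu+1)/\Gamma(\mu+n+1)$ then combines with the contour integral's $\Gamma(\nu-\mu-n+1)$ to produce $\binom{\nu}{\mu+n}$.

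Your reserve route is sounder in outline: on power functions the identity does collapse to a bilateral Vandermonde--Dougall summation, which is a known closed form. But the passage from power functions to general analytic $f,g$ is precisely where the difficulty you flag at the end becomes fatal rather than technical: the bilateral series is only conditionally convergent, so ``expanding in Taylor series and swapping the resulting double summations'' cannot be licensed by absolute convergence, and you give no alternative mechanism (e.g.\ uniform estimates on partial sums, or a contour-integral repackaging of the triple sum). As it stands, both routes name the right ingredients but neither supplies the step that actually makes the bilateral Leibniz rule true.
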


As a special case of this Theorem, putting $\mu=0$, we get the most natural generalisation of the original Leibniz rule for repeated derivatives:
\begin{equation}
\label{Leibniz:eqn:special}
\prescript{}{a}D^{\nu}_z\big(f(z)g(z)\big)=\sum_{n=0}^{\infty}\binom{\nu}{n}\prescript{}{a}D^{\nu-n}_zf(z)\prescript{}{a}D^{n}_zg(z).
\end{equation}
The symmetry between $f$ and $g$ in this expression \eqref{Leibniz:eqn:special}, however, is not clear, while it is clear in the more general expression \eqref{Leibniz:eqn}. \medskip

Additionally, the same results \eqref{Leibniz:eqn} and \eqref{Leibniz:eqn:special} are also valid for the right-handed differintegrals $\prescript{}{t}D_b^{\nu}$, and in particular for the Weyl fractional calculus given by $b=\infty$. \medskip

In a recent paper of the first author \cite{Arran-Lerch2018}, a new connection was established between zeta functions and fractional calculus. The first such connection was discovered by J. B. Keiper in 1975 \cite{Keiper}, and further connections have been studied by others, but the paper \cite{Arran-Lerch2018} was the first to discover an expression for the Riemann zeta function $\zeta(s)$ as a fractional differintegral which is valid even for $s$ in the critical strip $0<\Real(s)<1$. Here, we use these results as a starting point and continue with further investigation of the Lerch zeta function from the viewpoint of fractional calculus. \medskip

In Section \ref{sec:correc}, we revisit the results of \cite{Arran-Lerch2018}, provide a correction to the domain of validity, and prove a related result involving Weyl fractional calculus. In Section \ref{sec:tder} and Section \ref{sec:xder}, we consider partial fractional differintegration with respect to $t$ and $x$ respectively of the formula \eqref{result:eqn}. In Section \ref{sec:pde}, we combine the results of the previous two sections to obtain an infinite-order partial fractional differential equation which is satisfied by the Lerch zeta function.

\section{Revisiting the Results of \cite{Arran-Lerch2018}} \label{sec:correc}

It is necessary to correct an omission in the paper \cite{Arran-Lerch2018} in the assumptions on certain variables. The correct statement of \cite[Lemma 1.2]{Arran-Lerch2018} is as follows.

\begin{lemma}
\label{RL:exp}
For $\alpha,k\in\mathbb{C}$ with $\Real(k)\geq0$,
\begin{equation}
\label{RL:exp:eqn}
\prescript{}{-\infty}D_{t}^{\alpha}\left(e^{kt}\right)=k^{\alpha}e^{kt},
\end{equation}
where complex power functions are defined by the principal branch.
\end{lemma}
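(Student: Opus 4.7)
My plan is to split the proof into the fractional integral case $\Real(\alpha)<0$ and the fractional derivative case $\Real(\alpha)\geq 0$, and within the former I would first handle $\Real(k)>0$ strictly before treating the boundary $\Real(k)=0$.

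For $\Real(\alpha)<0$, the convention $\prescript{}{-\infty}D_t^{\alpha}=\prescript{}{-\infty}I_t^{-\alpha}$ together with the Riemann--Liouville definition gives
\[
\prescript{}{-\infty}D_t^{\alpha}\!\left(e^{kt}\right)=\frac{1}{\Gamma(-\alpha)}\int_{-\infty}^{t}(t-\tau)^{-\alpha-1}e^{k\tau}\,\mathrm{d}\tau.
\]
The substitution $u=t-\tau$ factors out $e^{kt}$ and reduces the problem to evaluating $\int_0^{\infty}u^{-\alpha-1}e^{-ku}\,\mathrm{d}u$. When $\Real(k)>0$ this integral converges absolutely, and the further substitution $v=ku$, with $k^{\alpha}$ interpreted through the principal branch to match the contour rotation, produces $k^{\alpha}\Gamma(-\alpha)$. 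This is the classical Mellin transform identity and delivers the formula at once.

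The boundary case $\Real(k)=0$ with $k\neq 0$ is the genuinely delicate point. Absolute convergence fails, but for $-1<\Real(\alpha)<0$ the integral still exists as an improper Riemann integral, and I would justify the identity by deforming the ray of integration in the $u$-plane into the open right half-plane (where Cauchy's theorem applies because $u^{-\alpha-1}e^{-ku}$ decays appropriately on large arcs), or equivalently by approaching $k=i\omega$ as a limit through values with $\Real(k)>0$ via an Abel-type argument. The identity in this restricted strip of $\alpha$-values then extends to all $\alpha$ with $\Real(\alpha)<0$ by analytic continuation in $\alpha$, since both sides are holomorphic in $\alpha$.

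To cover $\Real(\alpha)\geq 0$, I would pick the integer $n$ with $n-1<\Real(\alpha)\leq n$ so that $\Real(\alpha-n)<0$ lies in the already-established range. The definition then gives
\[
\prescript{}{-\infty}D_t^{\alpha}\!\left(e^{kt}\right)=\frac{\mathrm{d}^n}{\mathrm{d}t^n}\,\prescript{}{-\infty}I_t^{n-\alpha}\!\left(e^{kt}\right)=\frac{\mathrm{d}^n}{\mathrm{d}t^n}\!\left(k^{\alpha-n}e^{kt}\right)=k^{\alpha-n}\cdot k^{n}e^{kt}=k^{\alpha}e^{kt},
\]
closing the argument. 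The main obstacle will be the purely imaginary boundary $\Real(k)=0$: the elementary substitution $v=ku$ is not immediately valid there, so a careful contour deformation or limiting argument is needed to justify the principal-branch identification of $k^{\alpha}$ and to confirm that the resulting improper integral really equals $k^{\alpha}\Gamma(-\alpha)$.
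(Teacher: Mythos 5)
Your proposal is correct and follows essentially the same route as the paper: reduce the Riemann--Liouville integral to the gamma-function integral by substitution, justify the rotated contour for $\Real(k)>0$ by deformation and for $\Real(k)=0$ by a Jordan's-lemma/limiting argument, and then extend in $\alpha$. The only difference is cosmetic --- you handle $\Real(\alpha)\geq 0$ explicitly via $\frac{\mathrm{d}^n}{\mathrm{d}t^n}\,\prescript{}{-\infty}I_t^{n-\alpha}$, whereas the paper simply invokes analytic continuation in $\alpha$; your version is if anything slightly more explicit on that point.
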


\begin{proof}
We first assume $-1<\Real(\alpha)<0$, an assumption we will remove later by analytic continuation. Then, substituting $v=kt-k\tau$, we have
\begin{align*}
\prescript{}{-\infty}D_{t}^{\alpha}\left(e^{kt}\right)&=\tfrac{1}{\Gamma(-\alpha)}\int_{-\infty}^t(t-\tau)^{-\alpha-1}e^{k\tau}\,\mathrm{d}\tau \\
&=\tfrac{1}{\Gamma(-\alpha)}\int_{k\infty}^0\left(\tfrac{v}{k}\right)^{-\alpha-1}e^{kt-v}\big(\tfrac{1}{-k}\big)\,\mathrm{d}v \\
&=\tfrac{k^{\alpha}e^{kt}}{\Gamma(-\alpha)}\int^{k\infty}_0v^{-\alpha-1}e^{-v}\,\mathrm{d}v.
\end{align*}
This integral can be deformed to the usual gamma-function integral from $0$ to $\infty$, but only under the assumption that $\Real(k)>0$, or, by Jordan's lemma, $\Real(k)=0$.
\end{proof}

Due to the extra assumption required in the above Lemma, the main result of \cite{Arran-Lerch2018} also requires an extra assumption on the variable $x$. We state the correct form as follows, as it is our main starting point for this paper.

%We now recall the main result of the paper \cite{Arran-Lerch2018}, which will be our starting point here.

\begin{theorem}[\cite{Arran-Lerch2018}]
\label{frac-Lerch}
The Lerch zeta function can be written as
\begin{equation}\label{result:eqn}
L(t,x,s)=(2\pi)^s\exp\left[i\pi(\tfrac{s}{2}-2tx)\right]\prescript{}{-\infty}D_{t}^{-s}\left(\frac{e^{2\pi itx}}{1-e^{2\pi it}}\right),
\end{equation}
for any complex numbers $s,x,t$ satisfying $\Imag(t)>0$ and $\Imag(x)\leq0$, $x\not\in(-\infty,0]$.
\end{theorem}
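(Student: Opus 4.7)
The plan is to expand the right-hand factor as a geometric series, apply Lemma \ref{RL:exp} term by term, and match the result with the series definition \eqref{Lerch:defn} of the Lerch zeta function. First I would assume $\Real(s)>1$ to ensure absolute convergence of the Lerch series, and then remove this assumption at the end by analytic continuation in $s$.

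Under the hypothesis $\Imag(t)>0$ we have $|e^{2\pi it}|<1$, so a geometric expansion gives
\[
\frac{e^{2\pi itx}}{1-e^{2\pi it}}=\sum_{m=0}^{\infty}e^{2\pi i(m+x)t},
\]
with uniform convergence on compact subsets of the upper half $t$-plane. Applying $\prescript{}{-\infty}D_{t}^{-s}$ termwise, Lemma \ref{RL:exp} with $k=2\pi i(m+x)$ yields
\[
\prescript{}{-\infty}D_{t}^{-s}\,e^{2\pi i(m+x)t}=\bigl(2\pi i(m+x)\bigr)^{-s}e^{2\pi i(m+x)t},
\]
provided $\Real\bigl(2\pi i(m+x)\bigr)\geq 0$ for every $m\geq 0$, which is equivalent to $\Imag(x)\leq 0$. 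Using the principal branch, $(2\pi i(m+x))^{-s}=(2\pi)^{-s}e^{-i\pi s/2}(m+x)^{-s}$, where the factor $(m+x)^{-s}$ is well defined because the condition $x\not\in(-\infty,0]$, combined with $\Imag(x)\leq 0$, keeps every $m+x$ off the non-positive real axis. Summing and factoring out $e^{2\pi itx}$ gives
\[
\prescript{}{-\infty}D_{t}^{-s}\!\left(\frac{e^{2\pi itx}}{1-e^{2\pi it}}\right)=(2\pi)^{-s}e^{-i\pi s/2}e^{2\pi itx}\sum_{m=0}^{\infty}(m+x)^{-s}e^{2\pi itm},
\]
and the remaining sum is $L(t,x,s)$ by \eqref{Lerch:defn}. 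Rearranging produces \eqref{result:eqn}, and then analytic continuation in $s$ (both sides being meromorphic in $s$ on the stated domain) extends the identity outside the half-plane $\Real(s)>1$.

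The main obstacle is justifying the interchange of summation with the convolution integral defining $\prescript{}{-\infty}D_{t}^{-s}$; this requires an absolutely integrable majorant, which one obtains from the exponential decay $|e^{2\pi i(m+x)\tau}|=e^{-2\pi(m+\Real(x))\,\Imag(\tau)-2\pi\Imag(x)\,\Real(\tau)}$ as $\Real(\tau)\to-\infty$ (using $\Imag(t)>0$ after deforming the contour), together with polynomial bounds in $m$ uniform over the interval of integration. A secondary delicate point, and precisely the reason for the corrected hypotheses on $x$, is the consistent choice of the principal branch in both Lemma \ref{RL:exp} and the factors $(m+x)^{-s}$: the original argument in \cite{Arran-Lerch2018} tacitly assumed $\Imag(x)\leq 0$ and $x\not\in(-\infty,0]$, and it is exactly these conditions that make every invocation of the lemma legitimate and every branch consistent across the sum.
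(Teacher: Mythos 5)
Your argument is correct and is essentially the computation the paper (following \cite{Arran-Lerch2018}, and reproduced here for the Weyl analogue in Theorem \ref{Weyl:Lerch}) carries out, just read in the opposite direction: the paper starts from the series \eqref{Lerch:defn}, writes $(m+x)^{-s}e^{2\pi it(m+x)}$ as a differintegral of an exponential via the lemma, and then sums the geometric series, whereas you expand the geometric series first and apply Lemma \ref{RL:exp} termwise. The ingredients --- the lemma with $k=2\pi i(m+x)$, the condition $\Imag(x)\leq 0$ guaranteeing $\Real(k)\geq 0$, the principal-branch bookkeeping giving the factor $(2\pi)^{-s}e^{-i\pi s/2}$, and the final analytic continuation --- all match.
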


In the original paper \cite{Arran-Lerch2018}, this result was stated without the $\Imag(x)\leq0$ assumption. In the case $\Imag(x)>0$, however, we do have an analogous result, which involves right-handed differintegrals or Weyl differintegrals. This is derived as follows, starting with a lemma analogous to Lemma \ref{RL:exp} above.

\begin{lemma}
\label{Weyl:exp}
For $\alpha,k\in\mathbb{C}$ with $\Real(k)\leq0$,
\begin{equation}
\label{Weyl:exp:eqn}
\prescript{}{t}D_{\infty}^{\alpha}\left(e^{kt}\right)=(-k)^{\alpha}e^{kt},
\end{equation}
where complex power functions are defined by the principal branch.
\end{lemma}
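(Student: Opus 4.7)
The plan is to follow the strategy of Lemma \ref{RL:exp} very closely, simply mirroring every step for the right-handed operator. I would first reduce to the fractional integral case $-1<\Real(\alpha)<0$, since then $\prescript{}{t}D_{\infty}^{\alpha}=\prescript{}{t}I_{\infty}^{-\alpha}$ is given directly by the integral definition with no differentiation outside. The general case $\Real(\alpha)\geq 0$ would then be recovered by analytic continuation in $\alpha$: both sides of \eqref{Weyl:exp:eqn} are analytic in $\alpha\in\mathbb{C}$ (the right side manifestly, and the left side by the standard argument that differentiating the integral representation recovers the full differintegral).

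Starting from
\[
\prescript{}{t}D_{\infty}^{\alpha}(e^{kt})=\tfrac{1}{\Gamma(-\alpha)}\int_t^{\infty}(\tau-t)^{-\alpha-1}e^{k\tau}\,\mathrm{d}\tau,
\]
the natural substitution this time is $v=(-k)(\tau-t)$, chosen so that the sign inside the exponential flips and the new variable of integration moves into the right half plane (since $\Real(-k)\geq 0$). Under this substitution one has $\tau-t=v/(-k)$, $\mathrm{d}\tau=\mathrm{d}v/(-k)$, and $e^{k\tau}=e^{kt-v}$, so after collecting the factors of $-k$ the integral becomes
\[
\prescript{}{t}D_{\infty}^{\alpha}(e^{kt})=\frac{(-k)^{\alpha}e^{kt}}{\Gamma(-\alpha)}\int_0^{(-k)\infty}v^{-\alpha-1}e^{-v}\,\mathrm{d}v.
\]
The remaining task is to identify the contour integral with $\Gamma(-\alpha)$.

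The one subtle step, as in Lemma \ref{RL:exp}, is the deformation of the ray from $0$ to $(-k)\infty$ to the positive real axis. This is precisely where the hypothesis $\Real(k)\leq 0$ (equivalently $\Real(-k)\geq 0$) is essential: when $\Real(-k)>0$ the ray lies strictly in the open right half plane, and Cauchy's theorem together with a quarter-circle estimate justifies the deformation; when $\Real(-k)=0$, so that the ray is along the imaginary axis, one invokes Jordan's lemma in the same way as before. The asymmetry with Lemma \ref{RL:exp} comes from the fact that the right-handed integration variable $\tau$ runs towards $+\infty$ rather than $-\infty$, so one needs $-k$ (rather than $k$) to have non-negative real part for the image contour to remain in the region where the $e^{-v}$ factor gives exponential decay.

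The main obstacle is simply keeping track of the signs and branch conventions: the factor $(-k)^{\alpha}$ must be understood with the principal branch, and the substitution $v=(-k)(\tau-t)$ must be consistent with that choice so that $(\tau-t)^{-\alpha-1}=(-k)^{\alpha+1}v^{-\alpha-1}$ holds with the principal branch as well. No new analytic input beyond Cauchy's theorem, Jordan's lemma, and the uniqueness of analytic continuation is required.
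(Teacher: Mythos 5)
Your proposal is correct and follows essentially the same route as the paper: the same reduction to $-1<\Real(\alpha)<0$ with analytic continuation, the same substitution (your $v=(-k)(\tau-t)$ is exactly the paper's $v=kt-k\tau$), the same intermediate contour integral $\int_0^{(-k)\infty}v^{-\alpha-1}e^{-v}\,\mathrm{d}v$, and the same deformation argument via Cauchy's theorem for $\Real(k)<0$ and Jordan's lemma for $\Real(k)=0$. Your additional remarks on branch consistency and on why the sign of $k$ flips relative to Lemma \ref{RL:exp} are accurate elaborations of points the paper leaves implicit.
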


\begin{proof}
We first assume $-1<\Real(\alpha)<0$, an assumption we will remove later by analytic continuation. Then, substituting $v=kt-k\tau$ as before, we have
\begin{align*}
\prescript{}{t}D_{\infty}^{\alpha}\left(e^{kt}\right)&=\tfrac{1}{\Gamma(-\alpha)}\int_t^{\infty}(\tau-t)^{-\alpha-1}e^{k\tau}\,\mathrm{d}\tau \\
&=\tfrac{1}{\Gamma(-\alpha)}\int_0^{-k\infty}\left(\tfrac{v}{-k}\right)^{-\alpha-1}e^{kt-v}\big(\tfrac{1}{-k}\big)\,\mathrm{d}v \\
&=\tfrac{(-k)^{\alpha}e^{kt}}{\Gamma(-\alpha)}\int_0^{-k\infty}v^{-\alpha-1}e^{-v}\,\mathrm{d}v.
\end{align*}
This integral can be deformed to the usual gamma-function integral from $0$ to $\infty$, but only under the assumption that $\Real(k)<0$, or, by Jordan's lemma, $\Real(k)=0$.
\end{proof}

%We now recall the main result of the paper \cite{Arran-Lerch2018}, which will be our starting point here.

\begin{theorem}
\label{Weyl:Lerch}
The Lerch zeta function can be written as
\begin{equation}\label{Weyl:result:eqn}
L(t,x,s)=(2\pi)^s\exp\left[-i\pi(\tfrac{s}{2}+2tx)\right]\prescript{}{t}D_{\infty}^{-s}\left(\frac{e^{2\pi itx}}{1-e^{2\pi it}}\right),
\end{equation}
for any complex numbers $s,x,t$ satisfying $\Imag(t)>0$ and $\Imag(x)\geq0$, $x\not\in(-\infty,0]$.
\end{theorem}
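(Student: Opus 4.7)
The plan is to imitate the proof of Theorem \ref{frac-Lerch} in \cite{Arran-Lerch2018}, but with the left-handed Riemann--Liouville differintegral replaced by the right-handed Weyl version, so that Lemma \ref{Weyl:exp} may be invoked in place of Lemma \ref{RL:exp}. First I fix $t$ with $\Imag(t)>0$ (so that $|e^{2\pi it}|<1$) and expand the generating kernel as a geometric series,
$$\frac{e^{2\pi itx}}{1-e^{2\pi it}}=\sum_{m=0}^{\infty}e^{2\pi i(m+x)t},$$
after which the goal reduces to computing $\prescript{}{t}D_{\infty}^{-s}$ term by term.

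For each $m\geq 0$, I apply Lemma \ref{Weyl:exp} with $k=2\pi i(m+x)$: the hypothesis $\Imag(x)\geq 0$ gives $\Real(k)=-2\pi\Imag(x)\leq 0$, as required, and so
$$\prescript{}{t}D_{\infty}^{-s}\bigl(e^{2\pi i(m+x)t}\bigr)=\bigl(-2\pi i(m+x)\bigr)^{-s}e^{2\pi i(m+x)t}.$$
The assumptions $\Imag(x)\geq 0$ and $x\notin(-\infty,0]$ place $m+x$ in the closed upper half plane away from the non-positive real axis, so $\arg(m+x)\in[0,\pi)$; combined with $\arg(-2\pi i)=-\pi/2$, the principal-branch arguments add inside $(-\pi,\pi)$ and the power factors cleanly as $\bigl(-2\pi i(m+x)\bigr)^{-s}=(2\pi)^{-s}e^{i\pi s/2}(m+x)^{-s}$. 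Summing over $m$ and extracting the common factor $e^{2\pi ixt}$, I obtain
$$\prescript{}{t}D_{\infty}^{-s}\left(\frac{e^{2\pi itx}}{1-e^{2\pi it}}\right)=(2\pi)^{-s}e^{i\pi s/2}e^{2\pi ixt}L(t,x,s),$$
and rearranging, using $e^{-i\pi s/2}e^{-2\pi ixt}=\exp\bigl[-i\pi(s/2+2tx)\bigr]$, yields \eqref{Weyl:result:eqn}.

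The main technical obstacle is justifying the term-by-term interchange of $\prescript{}{t}D_{\infty}^{-s}$ with the infinite sum. I would first work in a parameter range where this is clean, namely $\Real(s)>1$ together with $\Imag(x)>0$ strictly, so that the Lerch series converges absolutely by \eqref{Lerch:defn}, $\prescript{}{t}D_{\infty}^{-s}$ is a bona fide Weyl fractional integral of order $s$ with the standard kernel $(\tau-t)^{s-1}$, and each individual term $e^{2\pi i(m+x)\tau}$ decays exponentially in $\Real(\tau)$ along a horizontal contour in the upper half plane, providing a uniform dominated-convergence majorant that permits the Fubini swap. Once \eqref{Weyl:result:eqn} is established in this regime, analytic continuation in $s$, and in $x$ across the boundary $\Imag(x)=0$ on the universal cover described in \cite{Lagarias2012}, extends the identity to the full domain claimed in the theorem, exactly as in the proof of Theorem \ref{frac-Lerch}.
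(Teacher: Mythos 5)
Your proposal is correct and follows essentially the same route as the paper: both proofs reduce the identity to Lemma \ref{Weyl:exp} applied term by term with $k=2\pi i(m+x)$ (where $\Imag(x)\geq 0$ guarantees $\Real(k)\leq 0$), use the geometric series for the kernel, and remove the auxiliary restrictions by analytic continuation. The only difference is cosmetic --- you compute the Weyl differintegral of the kernel and solve for $L$, while the paper rewrites the Lerch series until the differintegral appears --- and your branch bookkeeping and convergence discussion are consistent with the paper's argument.
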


\begin{proof}
The method of proof is exactly the same as in \cite{Arran-Lerch2018}. We start from the series definition \eqref{Lerch:defn} of the Lerch zeta function and follow the argument of \cite[Theorem 2.1]{Arran-Lerch2018}:
\begin{align*}
L(t,x,s)&=\sum_{m=0}^{\infty}(m+x)^{-s}e^{2\pi itm}=(-2\pi i)^s\sum_{m=0}^{\infty}(-2\pi im-2\pi ix)^{-s}e^{2\pi itm} \\
&=(-2\pi i)^se^{-2\pi itx}\sum_{m=0}^{\infty}(-2\pi i(m+x))^{-s}e^{2\pi it(m+x)} \\
&=(-2\pi i)^se^{-2\pi itx}\sum_{m=0}^{\infty}\prescript{}{t}D^{-s}_{\infty}\left(e^{2\pi it(m+x)}\right) \\
&=(-2\pi i)^se^{-2\pi itx}\prescript{}{t}D^{-s}_{\infty}\left(\frac{e^{2\pi it(m+x)}}{1-e^{2\pi it}}\right),
\end{align*}
provided that $\Imag(x)\geq0$ for the Weyl differintegration of the exponential function. The result is now proved under the assumptions $\Real(s)>1$, $\Real(x)>0$, $\Imag(x)\geq0$, and $\Imag(t)\geq0$. The remaining argument, using analytic continuation to remove these assumptions, is exactly as in \cite{Arran-Lerch2018}.
\end{proof}

The original work of \cite{Arran-Lerch2018} demonstrated that the formula \eqref{result:eqn} is consistent with certain other properties of the Lerch zeta function. Now, we shall take this work to the next stage by using the result of Theorem \ref{frac-Lerch} to prove further results on fractional derivatives of the Lerch zeta function, which can be written in the form of fractional partial differential equations satisfied by this function.

\section{Partial Derivatives with respect to $\boldsymbol{t}$} \label{sec:tder}

Before stating the main result of this section, we shall motivate it by deriving it in an unrigorous way using the series \eqref{Lerch:defn} for the Lerch zeta function. Our aim here is to find an expression for the fractional derivative with respect to $t$ of the Lerch zeta function $L(t,x,s)$. Starting from the series \eqref{Lerch:defn}, we can think in the following way:
\begin{align*}
\prescript{}{-\infty}D_{t}^{\alpha} L(t,x,s)&=\prescript{}{-\infty}D_{t}^{\alpha}\left(\sum_{m=0}^{\infty}(m+x)^{-s}e^{2\pi itm}\right) \\
&=\sum_{m=0}^{\infty}(m+x)^{-s}\prescript{}{-\infty}D_{t}^{\alpha}\left(e^{2\pi itm}\right) \\
&=\sum_{m=0}^{\infty}(m+x)^{-s}(2\pi im)^{\alpha}e^{2\pi itm} \\
&=(2\pi i)^{\alpha}\sum_{m=0}^{\infty}(m+x)^{-s+\alpha}\left(\frac{m}{m+x}\right)^{\alpha}e^{2\pi itm}.
\end{align*}
Using the binomial theorem, we have
\begin{align*}
\left(\frac{m}{m+x}\right)^{\alpha}=\left(1-\frac{x}{m+x}\right)^{\alpha}=\sum_{n=0}^{\infty}\binom{\alpha}{n}(-x)^n(m+x)^{-n},
\end{align*}
and therefore
\begin{align*}
\prescript{}{-\infty}D_{t}^{\alpha} L(t,x,s)&=(2\pi i)^{\alpha}\sum_{m=0}^{\infty}\sum_{n=0}^{\infty}\binom{\alpha}{n}(m+x)^{-s+\alpha-n}(-x)^ne^{2\pi itm} \\
&=(2\pi i)^{\alpha}\sum_{n=0}^{\infty}\binom{\alpha}{n}(-x)^n\sum_{m=0}^{\infty}(m+x)^{-s+\alpha-n}e^{2\pi itm} \\
&=(2\pi i)^{\alpha}\sum_{n=0}^{\infty}\binom{\alpha}{n}(-x)^nL(t,x,s-\alpha+n).
\end{align*}
The above derivation is neat but not rigorous. We have interchanged the order of fractional differintegration and summation; we have interchanged the two infinite series over $m$ and $n$; and we have not considered the restrictions on parameters necessary for all these series to converge. The final result, however, is valid, and more easily proved by using Theorem \ref{frac-Lerch}, as follows.

\begin{theorem}\label{theo:t}
For any $\alpha\in\mathbb{C}$, the Lerch zeta function satisfies the identity
\[
\prescript{}{-\infty}D_{t}^{\alpha} L(t,x,s)=(2\pi i)^{\alpha}\sum_{n=0}^{\infty}\binom{\alpha}{n}(-x)^nL(t,x,s-\alpha+n),
\]
for $\Imag(t)<0$, $\Imag(x)\leq0$, $x\not\in(-\infty,0]$, and $\Real(s)>0$, and also for any $t,x,s$ such that both sides converge.
\end{theorem}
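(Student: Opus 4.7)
The plan is to start from Theorem \ref{frac-Lerch} and apply $\prescript{}{-\infty}D_t^{\alpha}$ to both sides, then expand using the generalised Leibniz rule. Writing
\[
L(t,x,s) = (2\pi)^s e^{i\pi s/2} e^{-2\pi itx}\, h(t), \qquad h(t) = \prescript{}{-\infty}D_t^{-s}\left(\frac{e^{2\pi itx}}{1-e^{2\pi it}}\right),
\]
the constants $(2\pi)^s e^{i\pi s/2}$ pass freely through $\prescript{}{-\infty}D_t^{\alpha}$, so the problem reduces to computing $\prescript{}{-\infty}D_t^{\alpha}\bigl[h(t)\, e^{-2\pi itx}\bigr]$.

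The key step is a careful application of the special Leibniz rule \eqref{Leibniz:eqn:special}, taking the first factor $f=h(t)$ (which receives the fractional-order differintegrations $\prescript{}{-\infty}D_t^{\alpha-n}$) and the second factor $g=e^{-2\pi itx}$ (which receives only the integer-order differentiations $\prescript{}{-\infty}D_t^{n}$). This choice of ordering is crucial: because $\Imag(x)\leq 0$ forces $\Real(-2\pi ix)\leq 0$, Lemma \ref{RL:exp} is \emph{unavailable} for computing $\prescript{}{-\infty}D_t^{\alpha-n}(e^{-2\pi itx})$ at non-integer order. By contrast, $\prescript{}{-\infty}D_t^{n}(e^{-2\pi itx}) = (-2\pi ix)^n e^{-2\pi itx}$ is just ordinary differentiation for $n\in\mathbb{Z}_{\geq 0}$ and needs no such hypothesis.

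Next, the semigroup identity $\prescript{}{-\infty}D_t^{\alpha-n}\prescript{}{-\infty}D_t^{-s} = \prescript{}{-\infty}D_t^{\alpha-n-s}$ allows each resulting term to be re-packaged, via Theorem \ref{frac-Lerch} applied with $s$ replaced by $s-\alpha+n$, into $L(t,x,s-\alpha+n)$ multiplied by an explicit constant. Collecting the constants gives
\[
(2\pi)^{\alpha-n} e^{i\pi(\alpha-n)/2}(-2\pi ix)^n = (2\pi i)^{\alpha}(-x)^n,
\]
which is exactly what is needed to produce the claimed identity term by term.

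The main obstacle is not the algebra but the verification of hypotheses on a common domain: one must justify applying the generalised Leibniz rule to $h(t)\, e^{-2\pi itx}$ (both factors analytic away from the integer points where $1-e^{2\pi it}$ vanishes), check the semigroup property for $\prescript{}{-\infty}D_t$ on this class of functions, and confirm convergence of the resulting infinite series in the stated parameter range. Once the identity is established on a suitably restricted initial domain, analytic continuation in $s$, $x$, and $t$ yields its validity on the full set where both sides converge, giving the second clause of the theorem.
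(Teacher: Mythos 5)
Your proposal is correct and follows essentially the same route as the paper: apply the special Leibniz rule \eqref{Leibniz:eqn:special} to the product in \eqref{result:eqn} with the exponential $e^{-2\pi itx}$ receiving only integer-order derivatives, compose the differintegrals via the semigroup property under $\Real(s)>0$, repackage each term as $L(t,x,s-\alpha+n)$ with the constant $(2\pi i)^{\alpha}(-x)^n$, and finish by analytic continuation. Your additional observation that the factor ordering is forced because Lemma \ref{RL:exp} does not apply to $e^{-2\pi itx}$ when $\Imag(x)\leq 0$ is a nice explicit justification of a choice the paper makes silently.
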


\begin{proof}
We start from the expression \eqref{result:eqn} for the Lerch zeta function, and use the fractional Leibniz rule \eqref{Leibniz:eqn} for fractional differintegrals of products:
\[
\begin{aligned}
\prescript{}{-\infty}D_{t}^{\alpha} L(t,x,s)&= (2\pi)^s \prescript{}{-\infty}D_{t}^{\alpha} \left[e^{i\pi(\tfrac{s}{2}-2tx)}\prescript{}{-\infty}D_{t}^{-s}\left(\frac{e^{2\pi itx}}{1-e^{2\pi it}}\right)\right]\\
&=(2\pi)^{s}e^{i\pi s/2} \sum_{n=0}^{\infty} \binom{\alpha}{n}\frac{\mathrm{d}^n}{\mathrm{d}t^{n}} \left( e^{-2\pi itx}\right)\prescript{}{-\infty}D_{t}^{\alpha-n} \left[ \prescript{}{-\infty}D_{t}^{-s}\left(\frac{e^{2\pi itx}}{1-e^{2\pi it}}\right)\right]\\
&=(2\pi i)^{s} \sum_{n=0}^{\infty} \binom{\alpha}{n}\left(-2\pi ix \right)^{n} e^{-2\pi itx}\prescript{}{-\infty}D_{t}^{-s+\alpha-n}\left(\frac{e^{2\pi itx}}{1-e^{2\pi it}}\right)\\
&=\sum_{n=0}^{\infty} \binom{\alpha}{n}(2\pi i)^{s-\alpha+n} (2\pi i)^{\alpha-n} \left(-2\pi ix \right)^{n} e^{-2\pi itx}\prescript{}{-\infty}D_{t}^{-(s-\alpha+n)}\left(\frac{e^{2\pi itx}}{1-e^{2\pi it}}\right)\\
&=\sum_{n=0}^{\infty} \binom{\alpha}{n}(2\pi i)^{\alpha}(-x)^{n}\left[(2\pi)^{s-\alpha+n} e^{i\pi(\frac{s-\alpha+n}{2}-2tx)}\prescript{}{-\infty}D_{t}^{-(s-\alpha+n)}\left(\frac{e^{2\pi itx}}{1-e^{2\pi it}}\right)\right] \\
&=(2\pi i)^{\alpha}\sum_{n=0}^{\infty} \binom{\alpha}{n}(-x)^{n}L(t,x,s-\alpha+n).
\end{aligned}
\]
Here we have assumed that $\Real(s)>0$, so that the composition of fractional differintegrals works smoothly without extra initial value terms, as well as the restrictions on $t$ and $x$ from Theorem \ref{frac-Lerch}. But these conditions can be removed by analytic continuation, provided that both sides of the identity are well-defined and analytic.
\end{proof}

\begin{theorem}\label{theo:t2}
For any $\alpha\in\mathbb{C}$, the Lerch zeta function satisfies the identity
\[
\prescript{}{t}D_{\infty}^{\alpha} L(t,x,s)=(-2\pi i)^{\alpha}\sum_{n=0}^{\infty}\binom{\alpha}{n}(-x)^nL(t,x,s-\alpha+n),
\]
for $\Imag(t)<0$, $\Imag(x)\geq0$, $x\not\in(-\infty,0]$, and $\Real(s)>0$, and also for any $t,x,s$ such that both sides converge.
\end{theorem}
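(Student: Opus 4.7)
The plan is to mimic step-by-step the proof of Theorem \ref{theo:t}, replacing the Riemann--Liouville representation \eqref{result:eqn} with the Weyl representation \eqref{Weyl:result:eqn} from Theorem \ref{Weyl:Lerch}. First I would rewrite the prefactor as $\exp[-i\pi(\tfrac{s}{2}+2tx)] = e^{-i\pi s/2} e^{-2\pi i tx}$ and pull the constant $e^{-i\pi s/2}$ out of $\prescript{}{t}D_{\infty}^{\alpha}$. Then I would apply the right-handed version of the Leibniz rule \eqref{Leibniz:eqn:special} (valid for $\prescript{}{t}D_{\infty}^{\nu}$, as noted after Theorem \ref{Thm:Leibniz}) to the product $e^{-2\pi i tx} \cdot \prescript{}{t}D_{\infty}^{-s}\bigl(\tfrac{e^{2\pi itx}}{1-e^{2\pi it}}\bigr)$.

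The crucial sign bookkeeping is this: for non-negative integer $n$, one has $\prescript{}{t}D_{\infty}^{n} g = (-1)^n g^{(n)}$, so
\[
\prescript{}{t}D_{\infty}^{n}(e^{-2\pi i tx}) = (-1)^n(-2\pi i x)^n e^{-2\pi i tx} = (2\pi i x)^n e^{-2\pi i tx}.
\]
Combined with the composition $\prescript{}{t}D_{\infty}^{\alpha-n}\prescript{}{t}D_{\infty}^{-s} = \prescript{}{t}D_{\infty}^{\alpha-n-s}$ (legitimate under $\Real(s)>0$, as in Theorem \ref{theo:t}), the Leibniz expansion yields
\[
\prescript{}{t}D_{\infty}^{\alpha} L(t,x,s) = (2\pi)^s e^{-i\pi s/2} e^{-2\pi i tx}\sum_{n=0}^{\infty} \binom{\alpha}{n}(2\pi i x)^n \prescript{}{t}D_{\infty}^{-(s-\alpha+n)}\!\left(\frac{e^{2\pi itx}}{1-e^{2\pi it}}\right).
\]

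Finally, I would invoke \eqref{Weyl:result:eqn} at the shifted parameter $s-\alpha+n$ to repackage each term as a constant multiple of $L(t,x,s-\alpha+n)$, and then verify that the leftover coefficient is exactly $\binom{\alpha}{n}(-2\pi i)^{\alpha}(-x)^n$. The algebra collapses via the identity $(-2\pi i)^{\alpha}(2\pi)^{-\alpha} = (-i)^{\alpha} = e^{-i\pi\alpha/2}$: the $e^{\pm i\pi\alpha/2}$ factors cancel against those coming from $e^{-i\pi(s-\alpha+n)/2}$ in the shifted Lerch representation, leaving a residual $(-i)^n$ which combines with $(-2\pi x)^n$ to reproduce the $(2\pi i x)^n$ found above. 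The hypothesis $\Imag(x) \geq 0$ (as opposed to $\leq 0$) is precisely what is needed for the Weyl differintegration of $e^{2\pi i t(m+x)}$ via Lemma \ref{Weyl:exp}, and the remaining parameter restrictions are removed by analytic continuation, exactly as in Theorem \ref{theo:t}. The main obstacle is not conceptual but purely bookkeeping: ensuring that the Weyl operator's action on complex exponentials produces $(-2\pi i)^{\alpha}$ in place of the $(2\pi i)^{\alpha}$ of Theorem \ref{theo:t}, with all intermediate powers of $i$ tracked consistently through the principal branch.
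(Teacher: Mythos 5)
Your proposal is correct and is essentially the paper's own argument: the paper proves Theorem \ref{theo:t2} simply by rerunning the proof of Theorem \ref{theo:t} with the Weyl representation \eqref{Weyl:result:eqn} in place of \eqref{result:eqn} (all powers of $2\pi i$ traded for powers of $-2\pi i$), which is exactly what you do. Your sign bookkeeping --- $\prescript{}{t}D_{\infty}^{n}\bigl(e^{-2\pi itx}\bigr)=(2\pi ix)^{n}e^{-2\pi itx}$ and $(2\pi ix)^{n}(-2\pi i)^{-n}=(-x)^{n}$ --- checks out and reproduces the stated coefficient $(-2\pi i)^{\alpha}\binom{\alpha}{n}(-x)^{n}$.
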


\begin{proof}
The argument goes in exactly the same way as for Theorem \ref{theo:t}, with all powers of $2\pi i$ replaced by the corresponding powers of $-2\pi i$ and vice versa.
\end{proof}

\section{Partial Derivatives with respect to $\boldsymbol{x}$} \label{sec:xder}

\begin{lemma}
For any $k\in\mathbb{N}$ and $t,x,s$ in the domain of analyticity of the Lerch zeta function
\[
\frac{\partial^k}{\partial x^k} L(t,x,s)=\frac{\Gamma(1-s)}{\Gamma(1-s-k)}L(t,x,s+k).
\]
\end{lemma}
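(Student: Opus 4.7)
The plan is to prove the identity first in the Dirichlet series region $\Real(s)>1$, $\Real(x)>0$, $\Imag(t)\geq 0$, where differentiation under the summation is transparent, and then extend to the full domain of analyticity by uniqueness of analytic continuation. In the convergence region, I would start from the series \eqref{Lerch:defn} and differentiate term-by-term with respect to $x$, observing that the partial sums converge uniformly on compact subsets of the domain, which justifies the interchange of $\tfrac{\partial}{\partial x}$ and $\sum_{m=0}^\infty$.

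Carrying out the differentiation a single time yields
\[
\frac{\partial}{\partial x} L(t,x,s)=\sum_{m=0}^{\infty}(-s)(m+x)^{-s-1}e^{2\pi i t m}=-s\,L(t,x,s+1),
\]
so by an immediate induction on $k$ (or simply by applying the same operation $k$ times) I get
\[
\frac{\partial^k}{\partial x^k} L(t,x,s)=(-s)(-s-1)\cdots(-s-k+1)\,L(t,x,s+k).
\]
The next step is to recognise the prefactor as a Pochhammer symbol: $(-s)(-s-1)\cdots(-s-k+1)=(-1)^k (s)_k=(-1)^k\frac{\Gamma(s+k)}{\Gamma(s)}$. To put this in the stated form, I would use the elementary identity $(1-s-k)_k=(1-s-k)(2-s-k)\cdots(-s)=(-1)^k(s)_k$, which immediately gives $\frac{\Gamma(1-s)}{\Gamma(1-s-k)}=(-1)^k\frac{\Gamma(s+k)}{\Gamma(s)}$, matching the coefficient claimed in the lemma.

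Finally, I would extend the identity to the full domain of analyticity of $L$ by analytic continuation: both sides are meromorphic in $s$ (the Gamma ratio being entire in $s$ after cancellation of the trivial poles with zeros), and the identity has already been established on an open subset, so it holds wherever both sides are defined and analytic.

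I do not expect any serious obstacle here: termwise differentiation is standard in the half-plane of absolute convergence, and the algebraic manipulation $(1-s-k)_k=(-1)^k(s)_k$ is routine. The only mildly delicate point is the analytic-continuation step, which must be phrased carefully so that it applies on the universal cover of $\mathbb{C}\setminus\mathbb{Z}\times\mathbb{C}\setminus\mathbb{Z}_0^-\times\mathbb{C}$ described after \eqref{Lerch:defn}; this amounts to noting that both $\tfrac{\partial^k}{\partial x^k}L(t,x,s)$ and $L(t,x,s+k)$ are analytic in $(t,x,s)$ throughout that cover, so an identity between them on any open set propagates to the whole connected domain.
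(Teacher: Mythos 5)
Your proof is correct, but it takes a genuinely different route from the paper. The paper proves this lemma by differentiating the fractional-derivative representation \eqref{result:eqn} with respect to $x$, using the special case \eqref{Leibniz:specialcase} of the fractional Leibniz rule to handle the term $\prescript{}{-\infty}D_{t}^{-s}\bigl(2\pi i t\,e^{2\pi itx}/(1-e^{2\pi it})\bigr)$; after a cancellation this yields $\frac{\partial}{\partial x}L(t,x,s)=-sL(t,x,s+1)$, and the general case follows by finite descent, with the restrictions inherited from Theorem \ref{frac-Lerch} removed at the end by analytic continuation. You instead differentiate the Dirichlet series \eqref{Lerch:defn} term by term in the half-plane of absolute convergence and then analytically continue. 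Both arguments land on the same intermediate identity $\frac{\partial}{\partial x}L=-sL(\cdot,\cdot,s+1)$ and the same Gamma-quotient bookkeeping (your Pochhammer identity $(1-s-k)_k=(-1)^k(s)_k$ is just the telescoped form of $\Gamma(1-s)=(-s)(-s-1)\cdots(-s-k+1)\Gamma(1-s-k)$ that the paper writes directly). Your route is more elementary and self-contained: it needs only locally uniform convergence of the series of derivatives (Weierstrass), rather than the fractional Leibniz rule and the representation \eqref{result:eqn}, and it imposes milder initial hypotheses before the continuation step. What the paper's route buys is internal consistency with its fractional-calculus framework: the whole point of the section is to derive $x$-derivative relations from the differintegral representation, so proving the $k=1$ case that way serves as a check on \eqref{result:eqn} itself. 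Your closing remark about phrasing the continuation on the universal cover is the right level of care; no gap there.
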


\begin{proof}
First of all, we note that applying the fractional Leibniz rule \eqref{Leibniz:eqn} with $\mu=0$ and $g(z)=1$ gives the following neat identity:
\begin{equation}
\label{Leibniz:specialcase}
\prescript{}{a}D^{\nu}_z\big(zf(z)\big)=\sum_{n=0}^{\infty}\binom{\nu}{n}\prescript{}{a}D^{\nu-n}_zf(z)\prescript{}{a}D^{n}_z(z)=z\big[\prescript{}{a}D^{\nu}_zf(z)\big]+\nu\big[\prescript{}{a}D^{\nu-1}_zf(z)\big].
\end{equation}

For the proof, we start with $k=1$ and use the expression \eqref{result:eqn} for the Lerch zeta function:
\begin{align*}
\frac{\partial}{\partial x} L(t,x,s)&=(2\pi i)^s\frac{\partial}{\partial x}\left[e^{-2i\pi tx}\prescript{}{-\infty}D_{t}^{-s}\left(\frac{e^{2\pi itx}}{1-e^{2\pi it}}\right)\right] \\
&=(2\pi i)^s\left[(-2i\pi t)e^{-2i\pi tx}\prescript{}{-\infty}D_{t}^{-s}\left(\frac{e^{2\pi itx}}{1-e^{2\pi it}}\right)+e^{-2i\pi tx}\prescript{}{-\infty}D_{t}^{-s}\left(\frac{2\pi ite^{2\pi itx}}{1-e^{2\pi it}}\right)\right] \\
&=(2\pi i)^se^{-2i\pi tx}\left[(-2i\pi t)\prescript{}{-\infty}D_{t}^{-s}\left(\frac{e^{2\pi itx}}{1-e^{2\pi it}}\right)+\right. \\
&\hspace{2.5cm}\left.2\pi i\left[(t)\prescript{}{-\infty}D_{t}^{-s}\left(\frac{e^{2\pi itx}}{1-e^{2\pi it}}\right)+(-s)\prescript{}{-\infty}D_{t}^{-s-1}\left(\frac{e^{2\pi itx}}{1-e^{2\pi it}}\right)\right]\right] \\
&=(-s)(2\pi i)^se^{-2i\pi tx}\prescript{}{-\infty}D_{t}^{-s-1}\left(\frac{e^{2\pi itx}}{1-e^{2\pi it}}\right) \\
&=-sL(t,x,s+1),
\end{align*}
where in the third line we used the identity \eqref{Leibniz:specialcase}. The result for general $k$ follows by finite descent:
\begin{align*}
\frac{\partial^k}{\partial x^k} L(t,x,s)&=\frac{\partial^{k-1}}{\partial x^{k-1}}\big[(-s) L(t,x,s+1)\big]=\frac{\partial^{k-2}}{\partial x^{k-2}}\big[(-s)(-s-1) L(t,x,s+2)\big] \\
&=\dots=(-s)(-s-1)(-s-2)\dots(-s-k+1)L(t,x,s+k) \\
&=\frac{\Gamma(1-s)}{\Gamma(1-s-k)}L(t,x,s+k).
\end{align*}
The above argument relies on the result of Theorem \ref{frac-Lerch}, with the associated restrictions on $t,x,s$. However, these restrictions can be removed for the final result by analytic continuation.
\end{proof}

\begin{theorem}\label{theo:x}
For any $\alpha\in\mathbb{C}$, the Lerch zeta function satisfies the identity
\[
\prescript{}{x}D_{\infty}^{\alpha}~ L(t,x,s)=\frac{\Gamma(1-s)}{\Gamma(1-s-\alpha)}e^{-i\pi\alpha} L(t,x,s+\alpha),
\]
for any $t,x,s$ such that this fractional derivative exists.
\end{theorem}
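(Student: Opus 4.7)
The plan is to follow the template of the proof of Theorem~\ref{theo:t}, starting from Theorem~\ref{frac-Lerch}, which writes $L(t,x,s)=(2\pi i)^{s}\, e^{-2\pi itx}\, G(t,x)$ with $G(t,x)=\prescript{}{-\infty}D_{t}^{-s}\!\bigl(\tfrac{e^{2\pi itx}}{1-e^{2\pi it}}\bigr)$. Applying $\prescript{}{x}D_{\infty}^{\alpha}$ to both sides and invoking the generalised Leibniz rule \eqref{Leibniz:eqn:special} (which, as the paper notes, is equally valid for right-handed differintegrals) with $\mu=0$ gives
\[
\prescript{}{x}D_{\infty}^{\alpha}L(t,x,s)=(2\pi i)^{s}\sum_{n=0}^{\infty}\binom{\alpha}{n}\,\prescript{}{x}D_{\infty}^{\alpha-n}\!\bigl(e^{-2\pi itx}\bigr)\cdot\prescript{}{x}D_{\infty}^{n}G(t,x).
\]

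Both factors are then amenable to direct computation. The exponential prefactor is handled by Lemma~\ref{Weyl:exp}, which yields $\prescript{}{x}D_{\infty}^{\alpha-n}(e^{-2\pi itx})=(2\pi it)^{\alpha-n}e^{-2\pi itx}$ under an appropriate sign condition on $\Imag(t)$. For the inner factor, since $\prescript{}{x}D_{\infty}^{n}=(-1)^{n}\partial_{x}^{n}$ at non-negative integer orders and $G$ depends on $x$ only through the exponential $e^{2\pi itx}$, the operator $\partial_{x}^{n}$ passes inside $\prescript{}{-\infty}D_{t}^{-s}$; combining this with the preceding lemma (which identifies $\partial_{x}^{n}L$ with $\tfrac{\Gamma(1-s)}{\Gamma(1-s-n)}L(t,x,s+n)$) reduces each term of the Leibniz series to a scalar multiple of a shifted Lerch value.

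The remaining step, and the main obstacle, is to collapse the resulting double sum (in $n$ and in the shift index) into the single closed-form expression asserted in the theorem. Because Theorem~\ref{theo:x} --- unlike the infinite series in Theorem~\ref{theo:t} --- gives a single shifted Lerch value, some binomial identity of Chu--Vandermonde type is expected to cancel all but one contribution, leaving the coefficient $\tfrac{\Gamma(1-s)}{\Gamma(1-s-\alpha)}$ as the natural analytic continuation in $\alpha$ of the integer-order coefficient from the preceding lemma, together with the phase $e^{-i\pi\alpha}$ obtained by continuing the factor $(-1)^{n}=e^{-i\pi n}$ that arises from $\prescript{}{x}D_{\infty}^{n}=(-1)^{n}\partial_{x}^{n}$. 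A cleaner fallback, should the Leibniz bookkeeping prove opaque, is to apply $\prescript{}{x}D_{\infty}^{\alpha}$ term-by-term to the series \eqref{Lerch:defn}: for $\Real(\alpha)<0$, each $(m+x)^{-s}$ is Weyl-integrated directly via the substitution $v=u-x$ and a standard Beta-function identity, giving a multiple of $(m+x)^{-(s+\alpha)}$ that sums to $L(t,x,s+\alpha)$. In either approach, the technical restrictions on $t,x,s$ are removed at the end by analytic continuation, exactly as in the proofs of Theorems~\ref{theo:t} and \ref{theo:t2}.
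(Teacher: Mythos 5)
Your skeleton (start from \eqref{result:eqn}, hit the product with a Leibniz rule, clean up) matches the paper's, but the paper's proof lives or dies on the \emph{general} Leibniz rule \eqref{Leibniz:eqn} with its free parameter, applied \emph{twice} --- once in $x$ with a parameter $\gamma$ and once in $t$ with a parameter $\beta$ --- and on the specific choices $\beta=\gamma+n$ and $\gamma=\alpha$. Those choices do three jobs at once: they truncate both bilateral sums to one-sided sums, they ensure the prefactor $e^{-2\pi itx}$ only ever receives \emph{integer}-order derivatives, and they let the surviving double sum collapse through the finite identity $\sum_{n=0}^{m}\binom{m}{n}(-1)^n=\delta_{0m}$. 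Your $\mu=0$ version loses all three. Concretely: (i) $\prescript{}{x}D_{\infty}^{\alpha-n}\bigl(e^{-2\pi itx}\bigr)$ is a genuinely fractional Weyl differintegral with $k=-2\pi it$, and Lemma \ref{Weyl:exp} demands $\Real(k)\leq 0$, i.e.\ $\Imag(t)\leq 0$ --- incompatible with the hypothesis $\Imag(t)>0$ of Theorem \ref{frac-Lerch}; the defining integral $\int_x^{\infty}(u-x)^{\nu-1}e^{-2\pi itu}\,\mathrm{d}u$ actually diverges there. (ii) The collapse you hope for does not occur: writing $\prescript{}{x}D_{\infty}^{n}G$ in terms of $\partial_x^{j}L$ and exchanging the order of summation, the coefficient of $L(t,x,s+j)$ becomes $(-1)^{j}\binom{\alpha}{j}\sum_{m\geq 0}(-1)^{m}\binom{\alpha-j}{m}$, which for non-integer $\alpha$ diverges when $j>\Real(\alpha)$ and equals $(1-1)^{\alpha-j}=0$ when $j<\Real(\alpha)$; there is no Chu--Vandermonde cancellation leaving a single term. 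The step you flag as ``the main obstacle'' is precisely where the argument breaks, and the free parameter of \eqref{Leibniz:eqn} is not a convenience but the tool the paper uses to get past it.

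The fallback is also not a proof of the statement as written. Termwise Weyl integration of \eqref{Lerch:defn} gives, via the Beta integral, $\prescript{}{x}D_{\infty}^{\alpha}(m+x)^{-s}=\tfrac{\Gamma(s+\alpha)}{\Gamma(s)}(m+x)^{-s-\alpha}$ for $\Real(\alpha)<0<\Real(s+\alpha)$, hence the coefficient $\Gamma(s+\alpha)/\Gamma(s)$. By the reflection formula this equals the stated $\tfrac{\Gamma(1-s)}{\Gamma(1-s-\alpha)}e^{-i\pi\alpha}$ only up to the factor $e^{-i\pi\alpha}\sin(\pi(s+\alpha))/\sin(\pi s)$, which is $1$ exactly when $\alpha\in\mathbb{Z}$. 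So the two expressions agree at integer orders (consistent with the lemma of Section \ref{sec:xder}) but are different analytic continuations in $\alpha$; your fallback therefore proves a genuinely different identity for non-integer $\alpha$, and reconciling it with the theorem's coefficient would itself require an argument about branches that you have not supplied.
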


\begin{proof}
We start from \eqref{result:eqn} as usual, and use the general fractional Leibniz rule \eqref{Leibniz:eqn} twice with respect to $x$ in the following manipulations:
\[
\begin{aligned}
\prescript{}{x}D_{\infty}^{\alpha}~ L(t,x,s) &= (2\pi i)^s \prescript{}{x}D_{\infty}^{\alpha} \left[e^{-2\pi itx}\prescript{}{-\infty}D_{t}^{-s}\left(\frac{e^{2\pi itx}}{1-e^{2\pi it}}\right)\right]\\
&=(2\pi i)^{s}\sum_{n=-\infty}^{\infty} \binom{\alpha}{\gamma+n}\prescript{}{x}D_{\infty}^{\alpha-\gamma-n}\left( e^{-2\pi itx}\right)\prescript{}{-\infty}D_{t}^{-s} \prescript{}{x}D_{\infty}^{\gamma+n}\left(\frac{e^{2\pi itx}}{1-e^{2\pi it}}\right)\\
&=(2\pi i)^{s}\sum_{n=-\infty}^{\infty} \binom{\alpha}{\gamma+n}(2\pi it)^{\alpha-\gamma-n}e^{-2\pi itx}
\prescript{}{-\infty}D_{t}^{-s} \left((-2\pi it)^{\gamma+n}\frac{e^{2\pi itx}}{1-e^{2\pi it}}\right)\\
&=(2\pi)^{s+\alpha}\sum_{n=-\infty}^{\infty} \binom{\alpha}{\gamma+n}i^{s+\alpha-2\gamma-2n} t^{\alpha-\gamma-n}e^{-2\pi itx}\\
&\hspace{2cm} \times \left[\sum_{k=-\infty}^{\infty} \binom{-s}{\beta + k}
\prescript{}{-\infty}D_{t}^{\beta + k} \left(t^{\gamma+n}\right) \prescript{}{-\infty}D_{t}^{-s-\beta-k}  \left(\frac{e^{2\pi itx}}{1-e^{2\pi it}}\right)\right],
\end{aligned}
\]
where $\beta$ and $\gamma$ are the arbitrary parameters from Theorem \ref{Thm:Leibniz} (denoted there as $\mu$). The free choice of these two parameters will enable us to prove the theorem.

Note that in the above manipulation we have assumed that the fractional differintegration of exponential functions works in the way of Lemma \ref{Weyl:exp}. This assumption will be justified later, after we have fixed the values of $\beta$ and $\gamma$.

Firstly, we choose $\beta = \gamma + n$:
\begin{multline*}
\prescript{}{x}D_{\infty}^{\alpha}~ L(t,x,s) =(2\pi)^{s+\alpha}\sum_{n=-\infty}^{\infty} \binom{\alpha}{\gamma+n}i^{s+\alpha-2\gamma-2n}e^{-2\pi itx}t^{\alpha-\gamma-n}e^{-2\pi itx}\\
\times \sum_{k=-\infty}^{0} \binom{-s}{\gamma + n + k}
\left[\frac{\Gamma(\gamma+n+1)}{\Gamma(1-k)}t^{-k}\right]  \prescript{}{-\infty}D_{t}^{-s-\gamma-n-k}\left(\frac{e^{2\pi itx}}{1-e^{2\pi it}}\right)
\end{multline*}
Swapping $k$ with $-k$, we get the following:
\begin{multline*}
\prescript{}{x}D_{\infty}^{\alpha}~ L(t,x,s) =(2\pi)^{s+\alpha}\sum_{n=-\infty}^{\infty} \sum_{k=0}^{\infty} 
i^{s+\alpha-2\gamma-2n}t^{\alpha-\gamma-n+k}e^{-2\pi itx} \\
\times \frac{\alpha!(-s)!}{(\alpha-\gamma-n)!(\gamma + n-k)!(-s-\gamma-n+k)!k!} \prescript{}{-\infty}D_{t}^{-s-\gamma-n+k}\left(\frac{e^{2\pi itx}}{1-e^{2\pi it}}\right).
\end{multline*}

Next, we choose $\gamma = \alpha$:
\begin{multline*}
\prescript{}{x}D_{\infty}^{\alpha}~ L(t,x,s) =(2\pi)^{s+\alpha}\sum_{n=-\infty}^{0} \sum_{k=0}^{\infty} i^{s-\alpha-2n}t^{-n+k}e^{-2\pi itx} \\
\times \frac{\alpha!(-s)!}{(-n)!(\alpha + n-k)!(-s-\alpha-n+k)!k!} \prescript{}{-\infty}D_{t}^{-s-\alpha-n+k}\left(\frac{e^{2\pi itx}}{1-e^{2\pi it}}\right).
\end{multline*}
Swapping $n$ with $-n$, we get the following:
\begin{multline*}
\prescript{}{x}D_{\infty}^{\alpha}~ L(t,x,s) =(2\pi)^{s+\alpha}i^{s-\alpha}\sum_{n=0}^{\infty} \sum_{k=0}^{\infty} (-1)^nt^{n+k}e^{-2\pi itx} \\
\times \frac{\alpha!(-s)!}{n!(\alpha-n-k)!(-s-\alpha+n+k)!k!} \prescript{}{-\infty}D_{t}^{-s-\alpha+n+k}\left(\frac{e^{2\pi itx}}{1-e^{2\pi it}}\right).
\end{multline*}
Setting $m=n+k$ to simplify:
\[
\begin{aligned}
\prescript{}{x}D_{\infty}^{\alpha}~ L(t,x,s) &=(2\pi i)^{s+\alpha}e^{-i\pi\alpha}\sum_{m=0}^{\infty} \left[\sum_{n=0}^{m}\frac{(-1)^{n}m!}{n!(m-n)!}\right]t^{m}e^{-2\pi itx}\\
&\times \frac{\alpha!(-s)!}{m!(\alpha-m)!(-s-\alpha + m)!} \prescript{}{-\infty}D_{t}^{-s-\alpha+m}\left(\frac{e^{2\pi itx}}{1-e^{2\pi it}}\right)
\end{aligned}
\]
But since
\[
\sum_{n=0}^{m}\frac{(-1)^{n}m!}{n!(m-n)!} = \sum_{n=0}^{m} \binom{m}{n}(-1)^{n}1^{m-n} = (1-1)^{m} = \delta_{0m},
\]
we have that
\[
\begin{aligned}
\prescript{}{x}D_{\infty}^{\alpha}~ L(t,x,s) &=(2\pi i)^{s+\alpha}e^{-i\pi\alpha}t^{0}e^{-2\pi itx}\frac{(-s)!}{(-s-\alpha)!} \prescript{}{-\infty}D_{t}^{-s-\alpha}\left(\frac{e^{2\pi itx}}{1-e^{2\pi it}}\right)\\
&=e^{-i\pi\alpha}\frac{(-s)!}{(-s-\alpha)!} L(t,x,s+\alpha).
\end{aligned}
\]
We now note that, at the beginning of the proof, the differintegral to order $\alpha-\gamma-n$ was actually a standard repeated derivative, since $\alpha=\gamma$ and $n\in\mathbb{Z}^-$ (before the swapping of $n$ with $-n$). Meanwhile, the differintegration to order $\gamma+n$ was valid by Lemma \ref{Weyl:exp}, because $\Real(2\pi it)<0$ by the assumptions of Theorem \ref{frac-Lerch}. So the result is valid.

Again, the conditions on $t,x,s$ which are required by Theorem \ref{frac-Lerch} can be removed by analytic continuation at the end of the proof.
\end{proof}

\section{A Fractional Infinite-Order Partial Differential Equation} \label{sec:pde}

Combining the results of Theorem \ref{theo:t} and Theorem \ref{theo:x}, we obtain the following result on the Lerch zeta function.

\begin{theorem}
The Lerch zeta function satisfies the following infinite-order partial fractional differential equation in $t$ and $x$:
\[
\prescript{}{-\infty}D_{t}^{\alpha} L(t,x,s)=(-2\pi i)^{\alpha}\sum_{n=0}^{\infty}x^n\frac{\alpha!(\alpha-s-n)!}{(\alpha-n)!n!(-s)!}\prescript{}{x}D_{\infty}^{n-\alpha}~ L(t,x,s),
\]
or, using Weyl differintegrals only,
\[
\prescript{}{t}D_{\infty}^{\alpha} L(t,x,s)=(2\pi)^{\alpha}(-i)^{3\alpha}\sum_{n=0}^{\infty}x^n\frac{\alpha!(\alpha-s-n)!}{(\alpha-n)!n!(-s)!}\prescript{}{x}D_{\infty}^{n-\alpha}~ L(t,x,s).
\]
\end{theorem}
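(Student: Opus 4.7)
The plan is to treat the theorem as a direct corollary of Theorems \ref{theo:t}, \ref{theo:t2}, and \ref{theo:x}: the right-hand sides of Theorems \ref{theo:t} and \ref{theo:t2} are already series in $L(t,x,s-\alpha+n)$, and Theorem \ref{theo:x} tells us exactly how each such shifted Lerch value can be rewritten as a Weyl fractional derivative in $x$ of the unshifted $L(t,x,s)$. So the whole proof should reduce to a substitution followed by careful bookkeeping of complex powers.

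Concretely, I would first apply Theorem \ref{theo:x} with $\alpha$ replaced by $n-\alpha$, and solve for the shifted Lerch value to obtain
\[
L(t,x,s-\alpha+n)=\frac{\Gamma(1-s-n+\alpha)}{\Gamma(1-s)}\,e^{i\pi(n-\alpha)}\,\prescript{}{x}D_{\infty}^{n-\alpha}L(t,x,s).
\]
I would then substitute this expression into the sum given by Theorem \ref{theo:t}. The factor $(-x)^n = (-1)^n x^n$ coming from Theorem \ref{theo:t} combines with $e^{i\pi n}$ from the substitution, using $(-1)^n e^{i\pi n}=1$, to leave only a global $e^{-i\pi\alpha}$ outside the sum. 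I would then merge this with the prefactor $(2\pi i)^\alpha$ via the branch identity $(2\pi i)^\alpha e^{-i\pi\alpha}=(-2\pi i)^\alpha$ (coming from $-i=e^{-i\pi/2}$). The binomial coefficient $\binom{\alpha}{n}$ together with the gamma ratio $\Gamma(1-s-n+\alpha)/\Gamma(1-s)$ rearranges into the factorial expression $\alpha!(\alpha-s-n)!/[(\alpha-n)!\,n!\,(-s)!]$ appearing in the statement, giving the first identity.

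For the second identity I would run the same computation starting instead from Theorem \ref{theo:t2}, which has the same shape but with $(-2\pi i)^\alpha$ in front. The only change is the final simplification of the constant, where $(-2\pi i)^\alpha e^{-i\pi\alpha}=(2\pi)^\alpha e^{-3i\pi\alpha/2}=(2\pi)^\alpha(-i)^{3\alpha}$, which is precisely the prefactor in the Weyl-only form.

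I do not expect a conceptual obstacle; the only thing to be careful about is the branch arithmetic of the complex exponentials and the alignment of domains. Theorem \ref{theo:t} needs $\Imag(x)\leq 0$ and Theorem \ref{theo:t2} needs $\Imag(x)\geq 0$, while Theorem \ref{theo:x} is stated wherever the $x$-differintegral exists, so I would first assert the identity in the strip where all the hypotheses overlap and then invoke the same analytic-continuation argument used at the end of Theorems \ref{theo:t}, \ref{theo:t2}, and \ref{theo:x} to extend it as far as both sides remain analytic.
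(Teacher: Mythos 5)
Your proposal is correct and matches the paper's own proof: the authors likewise invert Theorem \ref{theo:x} with $\alpha$ replaced by $n-\alpha$ to write $L(t,x,s+n-\alpha)=e^{i\pi(n-\alpha)}\frac{\Gamma(1-s+\alpha-n)}{\Gamma(1-s)}\,\prescript{}{x}D_{\infty}^{n-\alpha}L(t,x,s)$ and substitute into Theorems \ref{theo:t} and \ref{theo:t2}. Your branch arithmetic ($(2\pi i)^{\alpha}e^{-i\pi\alpha}=(-2\pi i)^{\alpha}$ and $(-2\pi i)^{\alpha}e^{-i\pi\alpha}=(2\pi)^{\alpha}(-i)^{3\alpha}$) and the factorial bookkeeping are exactly what the paper's (terser) proof leaves implicit.
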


\begin{proof}
By Theorem \ref{theo:x}, we have
\[
e^{i\pi(n-\alpha)}\frac{\Gamma(1-s+\alpha-n)}{\Gamma(1-s)}\prescript{}{x}D_{\infty}^{n-\alpha}~ L(t,x,s)= L(t,x,s+n-\alpha).
\]
Substituting this into the results of Theorem \ref{theo:t} and Theorem \ref{theo:t2} yields, respectively, the two stated identities.
\end{proof}

\section{Conclusions}

This paper has been a continuation of the work of \cite{Arran-Lerch2018}, whose main result we cited as our Theorem \ref{frac-Lerch} above. We have added an extra condition in this Theorem which was erroneously omitted in the corresponding result in \cite{Arran-Lerch2018}; however, this does not affect the validity of the expression. Starting from this Theorem and the closely related Theorem \ref{Weyl:Lerch} which uses Weyl fractional calculus, we have considered fractional partial derivatives of the Lerch zeta function with respect to both $t$ and $x$, and in each case we discovered some functional equation relationships. Combining the results of these partial differentiation studies with respect to $t$ and $x$, we produced a fractional partial differential equation of infinite order which is satisfied by the Lerch zeta function.

\section*{Acknowledgements}
\noindent The second author's work is supported by the Deutscher Akademischer Austausch Dienst/German Academic Exchange Service (DAAD).

\bibliography{Djida-Arran}

\bibliographystyle{unsrt}

\end{document}